\theoremstyle{plain}
\newtheorem{theorem}{Theorem}[section]
\newtheorem{proposition}[theorem]{Proposition}
\theoremstyle{definition}
\newtheorem{definition}[theorem]{Definition}
\theoremstyle{remark}
\renewcommand{\lim}{\mathrm{lim}}
\newcommand{\colim}{\mathrm{colim}}
\newcommand{\Hom}{\mathrm{Hom}}
\newcommand{\BBB}{\mathfrak{b}}
\newcommand{\Mod}{\ensuremath{\mathsf{Mod}} }
\newcommand{\Pre}{\ensuremath{\mathsf{Pr}} }
\newcommand{\Sh}{\ensuremath{\mathsf{Sh}} }
\newcommand{\lra}{\longrightarrow}
\newcommand{\bbb}{\ensuremath{\mathcal{B}}}
\newcommand{\ooo}{\ensuremath{\mathcal{O}}}
\newcommand{\CC}{\ensuremath{\mathbf{C}}}
\newcommand{\DD}{\ensuremath{\mathbf{D}}}
\title{A sheaf of Hochschild complexes on quasi-compact opens}
\author{Wendy Lowen$^{\ast}$}
\address{Departement DWIS\\ Vrije Universiteit Brussel\\ Pleinlaan
2\\1050 Brussel\\ Belgium}
\email[Wendy Lowen]{wlowen@vub.ac.be}
\thanks{$^{\ast}$Postdoctoral fellow FWO/CNRS. The author acknowledges the hospitality of the 
Institut de Math\'ematiques de Jussieu (IMJ) and of 
the Institut des Hautes Etudes Scientifiques 
(IHES) during her postdoctoral fellowship with 
CNRS.}
\begin{document}
\begin{abstract}
For a scheme $X$, we construct a sheaf $\CC$ of complexes on $X$ such that for every quasi-compact open $U \subset X$, $\CC(U)$ is quasi-isomorphic to the Hochschild complex of $U$ \cite{lowenvandenbergh2}. Since $\CC$ is moreover acyclic for taking sections on quasi-compact opens, we obtain a local to global spectral sequence for Hochschild cohomology if $X$ is quasi-compact.
\end{abstract}
\maketitle

\section{Introduction}
Let $X$ be a scheme over a field $k$. In \cite{lowenvandenbergh2}, the Hochschild complex $\CC(X, \ooo_X)$ of $X$ is defined to be the Hochschild complex of the abelian category $\Mod(X)$ of sheaves on $X$.
Its cohomology theory coincides with 
various notions of Hochschild cohomology of X considered in the 
literature, for example by Swan \cite{swan} and Kontsevich \cite{kontsevich}, which in the 
commutative case agree with the earlier theory of Gerstenhaber-Schack \cite{gerstenhaberschack2}.

For a basis $\BBB$ of affine opens of $X$, there is an associated $k$-linear category (also denoted by $\BBB$) and there is a quasi-isomorphism 
$$\CC(X, \ooo_X) \cong \CC(\BBB)$$
where $\CC(\BBB)$ is the Hochschild complex of the $k$-linear category $\BBB$ (\S \ref{hochscheme}). The Hochschild complexes have a considerable amount of extra structure containing in particular the cup-product and the Gerstenhaber bracket. This extra structure is important for deformation theory. It is captured by saying that the complexes are $B_{\infty}$-algebras \cite{getzlerjones, keller6}, and $\cong$ as above means the existence of an isomorphism in the homotopy category of $B_{\infty}$-algebras. Let $\ooo_{\BBB}$ be the restriction of $\ooo_X$ to the basis $\BBB$. Then $\cong$ above is reflected in the fact that there is an equivalence between the deformation theory of $\Mod(X)$ as an abelian category \cite{lowenvandenbergh1} and the deformation theory of $\ooo_{\BBB}$ as a twisted presheaf \cite{lowen4}.

If we consider the restrictions $\BBB|_U$ of $\BBB$ to open subsets $U \subset X$, we obtain a presheaf of Hochschild complexes on $X$
$$\mathbf{C}_{\BBB}: U
\longmapsto \mathbf{C}(\BBB|_U)$$
To relate the ``global'' Hochschild complex $\CC(\BBB)$ to the ``local'' Hochschild complexes $\CC(\BBB|_U)$ of certain open subsets $U \subset X$, it would be desirable for $\CC_{\BBB}$ to be a sheaf, which is preferably acyclic for taking global sections. Unfortunately, $\CC_{\BBB}$ is not even a separated presheaf with regard to finite coverings.
In this paper, we construct a \emph{sheaf} $\CC$ of $B_{\infty}$-algebras such that
\begin{enumerate}
\item $\CC(U, \ooo_U) \cong \CC(U)$ \emph{for $U$ quasi-compact open}.
\item $\CC$ is acyclic for taking quasi-compact sections, i.e. $R\Gamma(U, \CC) = \CC(U)$ \emph{for $U$ quasi-compact open}.
\end{enumerate}
For $U$ quasi-compact open, $\CC(U)$ is obtained as a colimit of complexes $\CC_{\BBB}(U)$ over a collection $\bbb(U)$ of bases of $U$ (\S \ref{presheaf}). The properties of $\CC$ depend upon the choice of a \emph{good} presheaf $\bbb$ of bases (Definition \ref{good}). 

From properties (1) and (2), we readily deduce the existence of a local to global spectral sequence
$$E^{p,q}_2 = H^p(X, \mathbf{H}^q\CC) \Rightarrow H^{p+q}\CC(X)$$
 for Hochschild cohomology for a quasi-compact scheme $X$ (Theorem \ref{spectr}).

We should remark that for a smooth separated scheme, another sheaf of $B_{\infty}$-algebras $\DD_{\mathrm{poly}}$ is considered for example by Kontsevich \cite{Kontsevich1}, Van den Bergh \cite{vandenbergh4}, Yekutieli \cite{yekutieli1}.
Let $\CC(\ooo(U))$ be the Hochschild complex of the ring $\ooo(U)$, and let $\CC_{\mathrm{poly}}(\ooo(U))$ be the subcomplex which consists of the polydifferential operators, i.e. multilinear maps $\ooo(U)^{\otimes p} \lra \ooo(U)$  which are differential operators in each argument. Then for $U$ \emph{affine} open $\DD_{\mathrm{poly}}$ satisfies $$\DD_{\mathrm{poly}}(U) \cong \CC_{\mathrm{poly}}(\ooo(U))$$
The complex $R\Gamma(X, \DD_{\mathrm{poly}})$ computes the Hochschild cohomology of $X$, but a priori does not inherit the structure of $B_{\infty}$-algebra. One way to overcome this problem is by using a fibrant resolution $\DD_{\mathrm{poly}} \lra \mathbf{F}_{\mathrm{poly}}$ in the model category of presheaves of $B_{\infty}$-algebras as defined by Hinich \cite{hinich1}. Alternatively, in \cite[Appendix B]{vandenbergh4}, Van den Bergh constructs a quasi-isomorphic object $R\Gamma(X, \DD_{\mathrm{poly}})^{\mathrm{tot}}$ that \emph{does} inherit this structure (the construction, which uses pro-hypercoverings, is functorial and inherits any operad-algebra structure).  Moreover, $R\Gamma(X, \DD_{\mathrm{poly}})^{\mathrm{tot}}$ is isomorphic to $\CC(X, \ooo_X)$ in the homotopy category of $B_{\infty}$-algebras \cite[Theorem 3.1, Appendices A, B]{vandenbergh4} and by \cite[Appendix B.10]{vandenbergh4}, we actually have $R\Gamma(X, \DD_{\mathrm{poly}})^{\mathrm{tot}} \cong \Gamma(X, \mathbf{F}_{\mathrm{poly}})$ in the same sense.

Finally, as to the existence of a local to global spectral sequence for Hochschild cohomology for a general ringed space $(X, \ooo_X)$, a proof using hypercoverings is in preparation \cite{lowenvandenbergh3}.

\section{Presheaves of Hochschild complexes}
\subsection{The Hochschild complex of a scheme}\label{hochscheme}
Throughout, $k$ is a field.
Let $(X, \ooo_X)$ be a scheme over $k$ and let $\BBB$ be a basis of affine opens.
We use the same notation for the associated $k$-linear category with $\BBB$ as objects and 
$$\BBB(V,U) = \begin{cases} \ooo_X(V) & \text{if}\, \, \, V \subset U \\ 0 & \text{else} \end{cases}$$
In \cite[\S 7.1]{lowenvandenbergh2}, the Hochschild complex $\mathbf{C}(X, \ooo_X)$ of $X$ is defined and in \cite[Theorem 7.3.1]{lowenvandenbergh2}, there is shown to be a quasi-isomorphism $$\CC(X, \ooo_X) \cong \CC(\BBB)$$
where $\CC(\BBB)$ is the Hochschild complex of the $k$-linear category $\BBB$ \cite{mitchell}, i.e.  
$$\CC^p(\BBB) = \prod_{U_0, \dots, U_p \in \BBB} \Hom_k(\BBB(U_{p-1}, U_p) \otimes_k \dots \otimes_k \BBB(U_0, U_1), \BBB(U_0, U_p))$$
and the differential is the usual Hochschild differential. More concretely, we have
$$\CC^p(\BBB) = \prod_{U_0 \subset U_1 \subset \dots \subset U_p \in \BBB} \Hom_k(\ooo_X(U_{p-1}) \otimes_k  \dots \otimes_k \ooo_X(U_0), \ooo_X(U_0))$$
$$\CC^0(\BBB) = \prod_{U_0 \in \BBB} \ooo_X(U_0)$$
Hence this complex combines the nerve of the poset $\BBB$ with the algebraic structure of $\ooo_X$.
In fact, both complexes are $B_{\infty}$-algebras \cite{getzlerjones, keller6} and $\cong$ means the existence of an isomorphism in the homotopy category of $B_{\infty}$-algebras.

\subsection{The presheaf $\CC_{\BBB}$ of Hochschild complexes}
For an arbitrary open 
subset $U \subset X$, put $\BBB|_U = \{B \in
\BBB \, |\, B \subset U\}$. Then $\BBB|_U$ is a basis of affine opens for the scheme $(U, \ooo_U)$, hence we have a quasi-isomorphism
$$\CC(U, \ooo_U) \cong \CC(\BBB|_U)$$
For $V \subset U$ there is an obvious restriction map
$$\CC(\BBB|_U) \lra \CC(\BBB|_V)$$
We thus obtain a presheaf $$\CC_{\BBB}: U \longmapsto \CC_{\BBB}(U) = \CC(\BBB|_U)$$ of Hochschild complexes on $X$.
It is readily seen that in general, $\CC_{\BBB}$ fails to be a sheaf. Indeed, suppose we have $W \in \BBB$ and $W = U \cup V$ with $U$ and $V$ proper open subsets. Then there is a non-zero element $\varphi = (\varphi_{U_0})_{U_0} \in \CC^0_{\BBB}(W)$ with 
$$\varphi_{U_0} = \begin{cases} 1\in \ooo_X(U_0) & \text{if}\, \, \, U_0 = W \\ 0 & \text{else} \end{cases}$$
whose restriction to $U$ and $V$ is zero. In this example, the fact that $W = U \cup V$ makes the presence of $W$ in $\BBB$ redundant.  
This suggests that to obtain a sheaf, we have to work with variable bases, as will be done in the next section. 

\subsection{The presheaf $\CC_{\bbb}$ of colimit Hochschild complexes}\label{presheaf}
In this section we will consider instead of
$\mathbf{C}_{\BBB}(U)$ for a fixed basis $\BBB$ of $X$, a colimit of complexes $\mathbf{C}(\BBB)$ over
different bases $\BBB$ of $U$. More precisely, we are looking for collections $\bbb(U)$ of bases of affine opens of $U$, which allow us to define ``colimit Hochschild complexes''
$$\mathbf{C}_{\bbb}(U) = \colim_{\BBB \in \bbb(U)}\mathbf{C}(\BBB)$$
Here $\bbb(U)$ is ordered by $\supset$ and $\BBB \supset \BBB'$ corresponds
to the canonical $\mathbf{C}(\BBB) \lra \mathbf{C}(\BBB')$.
Since we do not want the colimit to change the cohomology, we want it to be a
filtered colimit. In particular, this is  the case if $\bbb(U)$ is closed
under intersections, i.e. if we have the operation
\begin{equation}\label{doorsnede}
\bbb(U) \times \bbb(U) \lra \bbb(U): (\BBB,\BBB') \longmapsto \BBB \cap \BBB'
\end{equation}
Note that in general, $\BBB \cap \BBB'$ need not even be a basis.
If $\bbb(U) \neq \varnothing$ and we have (\ref{doorsnede}),
then there
are quasi-isomorphisms
$$\mathbf{C}(U,\ooo_U) \cong \mathbf{C}_{\bbb}(U)$$
For $\mathbf{C}_{\bbb}: U \longmapsto 
\mathbf{C}_{\bbb}(U)$ to become a presheaf, we need restriction operations
\begin{equation}\label{restr}
\bbb(U) \lra \bbb(V): \BBB \longmapsto \BBB|_V = \{B \in \BBB \, |\, B
\subset V\}
\end{equation}
for $V \subset U$, making $\bbb$ itself into a presheaf of collections of bases. In this way, $\CC_{\bbb}$ clearly becomes a presheaf of $B_{\infty}$-algebras on $X$.

In order to prove Proposition \ref{fincov} in the next section, we need two more operations on $\bbb$.
First, we want to take the union of bases coinciding on the intersection of
their domains, i.e. we want the operation
\begin{equation}\label{glue}
\bbb(U) \times_{\bbb(U \cap V)} \bbb(V) \lra \bbb(U\cup V): (\BBB,\BBB')
\longmapsto
\BBB
\cup
\BBB'
\end{equation}
Secondly, we want to refine bases by plugging in finer bases, i.e. for $V
\subset U$ we want the operation
\begin{equation}\label{plug}
\bbb(U) \times \bbb(V) \lra \bbb(U): (\BBB_U, \BBB_V) \longmapsto \BBB_U \circ
\BBB_V = \{B \in
\BBB_U \, |\, B \subset V \implies B \in \BBB_V\}
\end{equation}
Note that (\ref{doorsnede}) is just a special case of (\ref{plug}).
Also, combining (\ref{restr}), (\ref{glue}) and (\ref{plug}) yields the
following refinement operation on $\bbb$. If $\delta$ is any finite collection of
open subsets of $U$ (not necessarily covering $U$), we have
\begin{equation}\label{delta}
\bbb(U) \longrightarrow \bbb(U): \BBB \longmapsto \BBB_{\delta} = \{B \in
\BBB\, |\, V \subset \cup \delta \implies \exists D \in \delta, V \subset D\}
\end{equation}

\begin{definition}\label{good}
$\bbb$ is called \emph{good} if $\bbb(X) \neq \varnothing$ and $\bbb$ has the operations (\ref{doorsnede}),\dots ,
(\ref{delta}).
\end{definition}

We will now show
that there exists a good $\bbb$.
\begin{proposition}
\begin{enumerate}
\item If $\bbb$ with $\bbb(X) \neq \varnothing$ has (\ref{restr}),
(\ref{glue}) and (\ref{plug}), then it is good.
\item
Let $\BBB$ be any basis of affine opens of $X$. There exists a smallest
good $\bbb$ with $\BBB \in \bbb(X)$.
This $\bbb$ is given by
$$\bbb(U) = \{(\BBB|_U)_{\delta_1, \dots \delta_n} \, |\, \delta_i \subset
\mathrm{open}(U),\, |\delta_i| < \infty \}.$$
\end{enumerate}
\begin{proof}
(1) follows from the discussion above. For (2), first note that $\bbb$ is obviously
contained in any good $\bbb'$. 
If $V \subset U$ and $\delta$ is a collection in $U$, we put $\delta|_V
= \{D \cap V\,|\,D \in \delta\}$. For any basis $\BBB'$ of $U$, we have $(\BBB'_{\delta})|_V =
(\BBB'|_V)_{(\delta|_V)}$ so (\ref{restr}) holds.
For (\ref{plug}), note that $(\BBB|_U)_{\delta_1, \dots \delta_n} \circ
(\BBB|_V)_{\varepsilon_1, \dots \varepsilon_m} = (B|_U)_{\delta_1, \dots
\delta_n,\varepsilon_1, \dots \varepsilon_m}$. Finally for (\ref{glue}), if
$(\BBB|_U)_{\delta_1, \dots \delta_n}$ and $(\BBB|_V)_{\varepsilon_1, \dots
\varepsilon_m}$ coincide on $U \cap V$, then their union equals $(\BBB|_{U \cup
V})_{\delta_1, \dots
\delta_n,\varepsilon_1, \dots \varepsilon_m, \{U,V\}}$.
\end{proof}
\end{proposition}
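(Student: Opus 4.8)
The plan is to treat the two parts separately, in each case reducing everything to the three basic operations \eqref{restr}, \eqref{glue} and \eqref{plug}. For part (1), I would first observe that \eqref{doorsnede} is literally the case $V = U$ of \eqref{plug}: when $V = U$ every $B \in \BBB_U$ satisfies $B \subset V$, so the condition ``$B \subset V \Rightarrow B \in \BBB_V$'' collapses to ``$B \in \BBB_V$'', giving $\BBB_U \circ \BBB_V = \BBB_U \cap \BBB_V$. The real content is deriving the refinement operation \eqref{delta}. Given $\BBB \in \bbb(U)$ and a finite collection $\delta = \{D_1, \dots, D_m\}$ of opens of $U$, write $W = \cup \delta$. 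I would restrict via \eqref{restr} to obtain $\BBB|_{D_i} \in \bbb(D_i)$ for each $i$; these agree on overlaps, since $\BBB|_{D_i}|_{D_i \cap D_j} = \BBB|_{D_i \cap D_j} = \BBB|_{D_j}|_{D_i \cap D_j}$. Gluing them over the finite cover $\{D_i\}$ of $W$ by \eqref{glue} yields $\GGG = \{B \in \BBB \mid \exists i,\ B \subset D_i\} \in \bbb(W)$. Since $W \subset U$, plugging $\GGG$ into $\BBB$ by \eqref{plug} gives $\BBB \circ \GGG = \{B \in \BBB \mid B \subset W \Rightarrow B \in \GGG\} = \BBB_\delta$, as desired.

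The step I expect to be the main obstacle is precisely this gluing over the finite cover $\{D_i\}$. The operation \eqref{glue} is stated for two opens, and the naive iteration is problematic: after gluing $\BBB|_{D_1} \cup \BBB|_{D_2}$ on $D_1 \cup D_2$, this partial union no longer coincides with $\BBB|_{D_3}$ on $(D_1 \cup D_2) \cap D_3$, because a basis element contained in that intersection need not lie in a single $D_i$. Refining one of the two bases to force coincidence would delete exactly the elements that $\BBB_\delta$ must retain. Thus one has to read \eqref{glue} as the gluing of an arbitrary \emph{pairwise-compatible} finite family over a finite cover --- a genuine sheaf-type condition on $\bbb$ --- rather than as a binary operation iterated blindly; the family $\{\BBB|_{D_i}\}_i$ is pairwise compatible and so glues in one stroke to $\GGG$. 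I would make this reading precise and use it as the operative form of \eqref{glue}.

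For part (2), I would argue containment and goodness separately. For containment, any good $\bbb'$ with $\BBB \in \bbb'(X)$ contains $\BBB|_U$ by \eqref{restr}, and hence contains every $(\BBB|_U)_{\delta_1, \dots, \delta_n}$ by iterating \eqref{delta}; therefore the displayed $\bbb$ sits inside every good $\bbb'$ and is the smallest possible candidate. To see that the displayed $\bbb$ is itself good, I would verify \eqref{restr}, \eqref{plug} and \eqref{glue} through the three explicit identities $(\BBB'_\delta)|_V = (\BBB'|_V)_{\delta|_V}$ with $\delta|_V = \{D \cap V \mid D \in \delta\}$; next $(\BBB|_U)_{\delta_1,\dots,\delta_n} \circ (\BBB|_V)_{\varepsilon_1,\dots,\varepsilon_m} = (\BBB|_U)_{\delta_1,\dots,\delta_n,\varepsilon_1,\dots,\varepsilon_m}$; and finally, when the two sides coincide on $U \cap V$, $(\BBB|_U)_{\delta_1,\dots,\delta_n} \cup (\BBB|_V)_{\varepsilon_1,\dots,\varepsilon_m} = (\BBB|_{U \cup V})_{\delta_1,\dots,\delta_n,\varepsilon_1,\dots,\varepsilon_m,\{U,V\}}$.

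Since $\BBB \in \bbb(X)$ (take the empty list of refinements) the set $\bbb(X)$ is nonempty, so part (1) applies and $\bbb$ is good, which combined with containment shows it is the smallest good presheaf with $\BBB \in \bbb(X)$. These three identities are routine set-theoretic manipulations of the definition of $(-)_\delta$, and I would dispatch them by unwinding the membership conditions on both sides; the only point that demands genuine care remains the finite-cover form of the gluing operation flagged above, since everything in part (2) ultimately feeds back through part (1).
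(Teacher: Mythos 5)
Your overall architecture matches the paper's: (\ref{doorsnede}) is observed to be the case $V=U$ of (\ref{plug}), and part (2) is handled by containment plus the three explicit identities describing how $(-)_\delta$ interacts with restriction, composition and union --- these identities are exactly the ones the paper records. The substantive point is that you have isolated, and tried to repair, the one step the paper leaves entirely implicit: the derivation of (\ref{delta}) from (\ref{restr}), (\ref{glue}) and (\ref{plug}) (the paper's proof of (1) is literally ``follows from the discussion above''). Your diagnosis that iterating the \emph{binary} operation (\ref{glue}) fails for $|\delta|\geq 3$ is correct, and the failure is not an artifact of your particular strategy: take $X=\mathrm{Spec}(k\times k\times k)=\{1,2,3\}$, $\BBB$ the set of all nonempty opens, and $\delta=\{\{1,2\},\{1,3\},\{2,3\}\}$. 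Every collection obtainable from $\BBB$ by (\ref{restr}), (\ref{glue}), (\ref{plug}) either contains $X$ or omits one of the three two-point opens --- this is an invariant preserved by all three operations, essentially because no two \emph{proper} opens of $X$ can between them contain all three two-point sets, so a binary glueing over proper opens can never produce all three --- whereas $\BBB_\delta=\BBB\setminus\{X\}$ violates it. So part (1), with (\ref{glue}) read as stated, is false rather than merely unproved; you were right to be suspicious.

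Your repair --- treating (\ref{glue}) as the glueing of an arbitrary pairwise-compatible finite family --- does make the derivation of (\ref{delta}) work (the family $\{\BBB|_{D_i}\}_i$ is pairwise compatible, glues to $\GGG=\{B\in\BBB\mid\exists i,\ B\subset D_i\}$, and $\BBB\circ\GGG=\BBB_\delta$), but you should say plainly that this is a \emph{strengthening of the axiom}, not a ``reading'' of it, and the strengthening then opens a gap in your part (2): you invoke part (1) for the displayed $\bbb$, yet you only verify the binary form of (\ref{glue}) there. The patch is cheap either way: the $m$-fold identity $(\BBB|_{U_1})_{\delta_\bullet^1}\cup\dots\cup(\BBB|_{U_m})_{\delta_\bullet^m}=(\BBB|_{U_1\cup\dots\cup U_m})_{\delta_\bullet^1,\dots,\delta_\bullet^m,\{U_1,\dots,U_m\}}$ holds for pairwise-compatible families by the same computation; alternatively, bypass part (1) in part (2) entirely by noting that the displayed $\bbb$ is closed under (\ref{delta}) tautologically, since $((\BBB|_U)_{\delta_1,\dots,\delta_n})_\varepsilon=(\BBB|_U)_{\delta_1,\dots,\delta_n,\varepsilon}$, so it is good without any appeal to (1). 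It is worth recording that nothing downstream is endangered: Proposition \ref{fincov} uses only (\ref{plug}) and the binary (\ref{glue}), and the minimality statement in (2) only uses that every good $\bbb'$ is, by definition, closed under (\ref{delta}).
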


\section{Sheaves of Hochschild complexes}

\subsection{The presheaf $\CC_{\bbb}$ for a good $\bbb$}
From now on, $\bbb$ is a good presheaf of bases and we consider the presheaf $\CC_{\bbb}$ of colimit Hochschild complexes as defined in \S \ref{presheaf}.

\begin{proposition}\label{fincov}
\begin{enumerate}
\item $\mathbf{C}_{\bbb}$ is flabby.
\item $\mathbf{C}_{\bbb}$ satisfies the sheaf condition with respect to finite coverings.
\end{enumerate}
\begin{proof}
(2) By induction, we may consider $U = U_1 \cup U_2$ and given elements
$\varphi_i
\in
\mathbf{C}_{\bbb}^{p}(U_i)$ such that $\varphi_1|_{U_{12}} = \varphi_2|_{U_{12}}$,
where
$U_{12} = U_1 \cap U_2$. Let $\varphi_i$ be a representing element in
$\mathbf{C}^p(\BBB_i)$ for a basis $\BBB_i \in \bbb(U_i)$ and let $\BBB'
\subset
\BBB_1|_{U_{12}} \cap \BBB_2|_{U_{12}}$ be a basis in $\bbb(U_{12})$ for
which
$\varphi_1|_{U_{12}}$ and
$\varphi_2|_{U_{12}}$ coincide in $\mathbf{C}^p(\BBB')$.
Put $\BBB_i' = \BBB_i \circ \BBB' \in \bbb(U_i)$ (using (\ref{plug})) and put 
$\BBB =
\BBB_1' \cup \BBB_2' \in \bbb(U \cup V)$ (using (\ref{glue})). We can now easily
give an element $\varphi \in \mathbf{C}^p(\BBB)$ which represents
a glueing of $\varphi_1$ and $\varphi_2$ on $U$, by specifying its value for
$V_0 \subset 
\dots \subset V_p$: if $V_p \in \BBB'_i$, we use the
element specified by $\varphi_i$. This is well defined since $V_p \in \BBB'_1
\cap \BBB'_2$ implies $V_p \in \BBB'$. It is a glueing of the $\varphi_i$ since
$\varphi$ and $\varphi_i$ coincide on $\BBB_i' \subset \BBB_i$.

Now suppose we have an element $\varphi \in
\mathbf{C}^p(\BBB')$ for $\BBB' \in \bbb(U)$ and suppose we have bases
$\BBB_i
\subset
\BBB'|_{U_i}$ for which $\varphi|_{U_i}$ becomes zero in
$\mathbf{C}^p(\BBB_i)$.
If we put $\BBB'_i = \BBB_i \circ (\BBB_1|_{U_{12}} \cap
\BBB_2|_{U_{12}})$ and $\BBB = \BBB'_1 \cup \BBB'_2$, then $\varphi$
becomes zero in $\mathbf{C}^p(\BBB')$.

(1) Consider the restriction map
$\mathbf{C}^p(U) \lra \mathbf{C}^p(V)$ for $V
\subset U$. If $\varphi \in \mathbf{C}^p(\BBB)$ is a representing
element in the codomain, we can lift it to $\bar{\varphi} \in
\mathbf{C}^p(\BBB' \circ \BBB)$ where $\BBB' \in \bbb(U)$ is arbitrary
and the value of $\bar{\varphi}$ for
$V_0 \subset \dots \subset V_p$ is the value specified by $\varphi$ if $V_p
\subset V$ and is zero else. 
\end{proof}
\end{proposition}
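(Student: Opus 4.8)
The plan is to exploit throughout that, thanks to the intersection operation (\ref{doorsnede}), each $\CC^p_{\bbb}(U) = \colim_{\BBB \in \bbb(U)} \CC^p(\BBB)$ is a \emph{filtered} colimit ordered by $\supset$. Consequently every cochain-level phenomenon---an element, an equality of two elements, or the vanishing of an element---is already witnessed on a single basis $\BBB \in \bbb(U)$, and two witnesses can always be compared after passing to a common finer basis. The operations (\ref{restr}), (\ref{glue}) and (\ref{plug}) are precisely the tools that let me manufacture such common refinements inside $\bbb$, so the proof amounts to choosing the right bases and then reading off a cochain chain-by-chain. For (2) I would first reduce, by induction on the size of a finite cover, to a two-element cover $U = U_1 \cup U_2$, write $U_{12} = U_1 \cap U_2$, and then treat separately the gluing (existence) and separation (injectivity) halves of the sheaf axiom.

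For the gluing half, given $\varphi_i \in \CC^p_{\bbb}(U_i)$ with $\varphi_1|_{U_{12}} = \varphi_2|_{U_{12}}$, I would represent each $\varphi_i$ by a cochain on some $\BBB_i \in \bbb(U_i)$. Filteredness of the colimit over $\bbb(U_{12})$ then yields a basis $\BBB' \in \bbb(U_{12})$, refining both $\BBB_1|_{U_{12}}$ and $\BBB_2|_{U_{12}}$, on which the two restrictions become \emph{literally} equal in $\CC^p(\BBB')$. Plugging $\BBB'$ into each $\BBB_i$ via (\ref{plug}) produces $\BBB_i' = \BBB_i \circ \BBB'$; the purpose of this step is that $\BBB_1'$ and $\BBB_2'$ now restrict to one and the same basis on $U_{12}$, so (\ref{glue}) applies and $\BBB = \BBB_1' \cup \BBB_2' \in \bbb(U)$. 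I would then define the glued cochain on a chain $V_0 \subset \dots \subset V_p$ in $\BBB$ by using the value prescribed by $\varphi_i$ whenever the top term $V_p$ lies in $\BBB_i'$, and finally check that this restricts back to each $\varphi_i$, using that $\BBB$ restricts to $\BBB_i'$ on $U_i$.

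For the separation half I would argue dually: if $\varphi$ represented on $\BBB' \in \bbb(U)$ restricts to zero over each $U_i$, filteredness gives refinements $\BBB_i \subset \BBB'|_{U_i}$ on which $\varphi|_{U_i}$ already vanishes; I would then set $\BBB_i' = \BBB_i \circ (\BBB_1|_{U_{12}} \cap \BBB_2|_{U_{12}})$ so that the two pieces agree on the overlap, glue them to $\BBB = \BBB_1' \cup \BBB_2' \subset \BBB'$, and observe that the value of $\varphi$ on any chain in $\BBB$ is read off from a single $\BBB_i$, where it is already zero. Part (1), flabbiness, is the easiest: for $V \subset U$ and a class over $V$ represented on $\BBB \in \bbb(V)$, I would pick any $\BBB' \in \bbb(U)$---nonempty since $\bbb(X) \neq \varnothing$ together with (\ref{restr})---form $\BBB' \circ \BBB \in \bbb(U)$ by (\ref{plug}), and define the lift to agree with the given cochain on chains contained in $V$ and to be zero otherwise, so that its restriction to $V$ recovers the original class.

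The main obstacle I anticipate is not any single step but the combinatorial bookkeeping that makes these cochains well defined and consistent. Concretely, I must verify that a chain whose top term $V_p$ lies in $\BBB_i'$ lies \emph{entirely} in $\BBB_i'$---this uses that a lower term contained in the overlap $U_{12}$ necessarily falls into the common basis $\BBB'$---and that when $V_p$ lies in both $\BBB_1'$ and $\BBB_2'$ the whole chain lives in $\BBB'$, where the defining data coincide, so the two prescriptions agree. Establishing these ``top element determines the chain'' statements, together with the matching identities $\BBB_i'|_{U_{12}} = \BBB'$ (respectively $= \BBB_1|_{U_{12}} \cap \BBB_2|_{U_{12}}$) that legitimize the use of (\ref{glue}), is exactly where the precise definition of the operation $\circ$ in (\ref{plug}) does the essential work.
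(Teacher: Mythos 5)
Your proposal is correct and follows essentially the same route as the paper's proof: the same reduction to a two-set cover, the same choice of bases $\BBB_i' = \BBB_i \circ \BBB'$ glued to $\BBB = \BBB_1' \cup \BBB_2'$ via (\ref{plug}) and (\ref{glue}), the same chain-by-chain definition of the glued cochain according to where the top term $V_p$ lies, the same argument for separatedness, and the same extension-by-zero lift for flabbiness. The bookkeeping points you flag (that a chain in $\BBB$ with $V_p \in \BBB_i'$ lies entirely in $\BBB_i'$, and that $\BBB_i'|_{U_{12}} = \BBB'$ so that (\ref{glue}) applies) are exactly the facts the paper's proof uses implicitly, so there is no gap.
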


\subsection{The sheaf $\CC_{\mathrm{qc}}$ of colimit Hochschild complexes}

Let $\mathrm{qc}(X) \subset \mathrm{open}(X)$ be the subposet of quasi-compact opens with the
induced Grothendieck topology. We immediately get:

\begin{proposition}\label{sheaf}
The restriction $\CC_{\mathrm{qc}}$ of $\CC_{\bbb}$ to $\mathrm{qc}(X)$ is a sheaf.
\end{proposition}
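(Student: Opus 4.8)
I want to show that restricting the presheaf $\CC_{\bbb}$ to the subposet $\mathrm{qc}(X)$ of quasi-compact opens yields a sheaf for the induced Grothendieck topology. The key observation is that on $\mathrm{qc}(X)$, every cover admits a \emph{finite} subcover, so the sheaf condition on $\mathrm{qc}(X)$ is governed entirely by finite coverings — and that case has already been settled in Proposition \ref{fincov}(2).

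\emph{First step.} I would recall that the induced Grothendieck topology on $\mathrm{qc}(X)$ has as covers those families $\{U_i \to U\}_{i \in I}$ with $U, U_i \in \mathrm{qc}(X)$ and $U = \bigcup_i U_i$. Since $U$ is quasi-compact, finitely many of the $U_i$ already cover $U$. Hence for any such cover there is a finite subset $J \subset I$ with $U = \bigcup_{j \in J} U_j$, and each $U_j$ is quasi-compact. The strategy is to reduce the general sheaf condition to the finite one.

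\emph{Second step — reduction.} Given a matching family $(\varphi_i)_{i \in I}$ with $\varphi_i \in \CC_{\bbb}^p(U_i)$ agreeing on overlaps, I would apply Proposition \ref{fincov}(2) to the finite subcover $\{U_j\}_{j \in J}$ to produce a unique glued section $\varphi \in \CC_{\bbb}^p(U)$ restricting to $\varphi_j$ for $j \in J$. It then remains to check compatibility with the remaining indices $i \in I \setminus J$: for such $i$, the intersection $U_i \cap U_j$ runs over $j \in J$, and since $\{U_i \cap U_j\}_{j \in J}$ is a finite quasi-compact cover of $U_i$, the matching condition together with separatedness for finite covers (again Proposition \ref{fincov}(2), applied over $U_i$) forces $\varphi|_{U_i} = \varphi_i$. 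Uniqueness of $\varphi$ follows the same way: two glued sections agreeing on the $U_j$ agree on the finite subcover, hence coincide by the separatedness half of Proposition \ref{fincov}(2).

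\emph{Main obstacle.} The only genuine point to verify is that the finite-cover sheaf condition of Proposition \ref{fincov}(2) remains available \emph{inside} $\mathrm{qc}(X)$ — that is, all the opens appearing in the reduction (the $U_j$, the pairwise intersections $U_i \cap U_j$, and the glued $U$) are themselves quasi-compact. Finite unions and finite intersections of quasi-compact opens in a scheme are again quasi-compact, so this causes no difficulty; the finite-covering result was proved for the full presheaf $\CC_{\bbb}$ on $\mathrm{open}(X)$ and restricts verbatim. I therefore expect the proof to be essentially immediate, which is consistent with the statement being phrased as something we ``immediately get'' — the substantive work was already carried out in Proposition \ref{fincov}.
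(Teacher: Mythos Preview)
Your approach is correct and essentially identical to the paper's: reduce to finite covers via quasi-compactness and invoke Proposition~\ref{fincov}(2); the paper simply states this in one line without spelling out the reduction. One minor caveat: finite intersections of quasi-compact opens need not be quasi-compact unless $X$ is quasi-separated, but since Proposition~\ref{fincov}(2) is proved for $\CC_{\bbb}$ on all of $\mathrm{open}(X)$, your separatedness check over $U_i$ goes through regardless of whether $U_i \cap U_j$ lies in $\mathrm{qc}(X)$.
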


\begin{proof}
Since every covering of a quasi-compact $U \subset X$ has a finite
subcovering, it suffices to check the sheaf condition on finite coverings,
which is done in Proposition \ref{fincov}.
\end{proof}

\subsection{The sheaf $\CC = a\CC_{\bbb}$}\label{thesheaf}
Let $\Pre(X)$ and $\Sh(X)$ (resp. $\Pre_{\mathrm{qc}}(X)$ and $\Sh_{\mathrm{qc}}(X)$) be the categories of presheaves and sheaves on $X$ (resp. on $\mathrm{qc}(X)$). Since $\mathrm{qc}(X)$ is a basis of $X$, by the (proof of the) Lemme the Comparaison \cite{artingrothendieckverdier1}, there is a commutative square
$$\xymatrix{\Pre(X) \ar[d]_{a} \ar[r]^-{(-)_{\mathrm{qc}}} & \Pre_{\mathrm{qc}}(X) \ar[d]^{a'}\\
\Sh(X) \ar[r]^-{(-)_{\mathrm{qc}}}_-{\cong} & \Sh_{\mathrm{qc}}(X)}$$
in which the vertical arrows are sheafifications, the horizontal arrows are restrictions to $\mathrm{qc}(X)$, and the lower horizontal arrow is an equivalence. Let $\CC = a\mathbf{C}_{\bbb}$ be the sheafification of
$\mathbf{C}_{\bbb}$.

\begin{proposition}\label{nochange}
If $U \subset X$ is a quasi-compact open, then
$$\mathbf{C}_{\bbb}(U) \lra \mathbf{C}(U)$$ is an isomorphism. In particular, there is a quasi-isomorphism
$$\CC(U, \ooo_U) \cong \CC(U)$$
\end{proposition}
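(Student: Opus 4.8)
I want to show that for quasi-compact open $U$, the canonical map $\CC_\bbb(U) \to \CC(U) = (a\CC_\bbb)(U)$ from the presheaf to its sheafification is an isomorphism.

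The key is the commutative square relating $\Pre(X)$ and $\Sh(X)$ with their restrictions to $\mathrm{qc}(X)$. Sheafification $\CC = a\CC_\bbb$ followed by restriction to $\mathrm{qc}(X)$ should agree with sheafification of the restriction $(\CC_\bbb)_{\mathrm{qc}}$ in $\Sh_{\mathrm{qc}}(X)$. But by Proposition \ref{sheaf}, $(\CC_\bbb)_{\mathrm{qc}} = \CC_{\mathrm{qc}}$ is ALREADY a sheaf on $\mathrm{qc}(X)$, so its sheafification $a'$ does nothing.

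**Proof plan.**

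The plan is to read off the statement from the comparison square of \S\ref{thesheaf} together with the sheaf property of Proposition \ref{sheaf}. First I would restrict the sheafification unit $\CC_\bbb \to a\CC_\bbb = \CC$ to the subposet $\mathrm{qc}(X)$, obtaining a morphism $(\CC_\bbb)_{\mathrm{qc}} \to \CC_{\mathrm{qc}}$ in $\Pre_{\mathrm{qc}}(X)$, where I write $\CC_{\mathrm{qc}}$ for the restriction $(\CC)_{\mathrm{qc}}$ of the sheaf $\CC$. The content of the commutative square is precisely that this restricted morphism is the sheafification unit computed in $\mathrm{qc}(X)$: that is, $(\CC_\bbb)_{\mathrm{qc}} \to (\CC)_{\mathrm{qc}}$ identifies with $(\CC_\bbb)_{\mathrm{qc}} \to a'(\CC_\bbb)_{\mathrm{qc}}$ under the equivalence $(-)_{\mathrm{qc}}\colon \Sh(X) \xrightarrow{\cong} \Sh_{\mathrm{qc}}(X)$.

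Next I would invoke Proposition \ref{sheaf}: the restriction $(\CC_\bbb)_{\mathrm{qc}}$ is already a sheaf on $\mathrm{qc}(X)$. Hence the sheafification unit $(\CC_\bbb)_{\mathrm{qc}} \to a'(\CC_\bbb)_{\mathrm{qc}}$ is an isomorphism of sheaves on $\mathrm{qc}(X)$. Transporting back along the equivalence of categories, the morphism $(\CC_\bbb)_{\mathrm{qc}} \to (\CC)_{\mathrm{qc}}$ is an isomorphism in $\Pre_{\mathrm{qc}}(X)$, i.e. an isomorphism of presheaves on $\mathrm{qc}(X)$. Evaluating at any object $U \in \mathrm{qc}(X)$ then gives that $\CC_\bbb(U) \to \CC(U)$ is an isomorphism, which is the first assertion.

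For the final quasi-isomorphism, I would simply compose: for $U$ quasi-compact open, $\bbb(U) \neq \varnothing$ and $\bbb$ has the intersection operation (\ref{doorsnede}) since $\bbb$ is good, so by the discussion in \S\ref{presheaf} there is a quasi-isomorphism $\CC(U, \ooo_U) \cong \CC_\bbb(U)$ of $B_\infty$-algebras. Combining this with the isomorphism $\CC_\bbb(U) \xrightarrow{\cong} \CC(U)$ just established yields $\CC(U,\ooo_U) \cong \CC(U)$.

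**Main obstacle.** The one point that deserves care, rather than routine verification, is the identification in the first step: that restricting the \emph{global} sheafification unit to $\mathrm{qc}(X)$ really does reproduce the \emph{local} sheafification unit on $\mathrm{qc}(X)$. This is exactly what the commutativity of the square extracted from the Lemme de Comparaison provides—the square says $a' \circ (-)_{\mathrm{qc}} \cong (-)_{\mathrm{qc}} \circ a$ as functors $\Pre(X) \to \Sh_{\mathrm{qc}}(X)$—but one should be slightly careful that this natural isomorphism is compatible with the respective units, so that "$\CC_\bbb(U)\to\CC(U)$ is the local unit" holds on the nose and not merely up to some unspecified isomorphism. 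Once that compatibility is granted (it is built into the Lemme de Comparaison, since the equivalence $(-)_{\mathrm{qc}}$ is induced by restriction along the inclusion of a topology-generating basis), everything else is formal.
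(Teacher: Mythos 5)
Your proposal is correct and is essentially the paper's own argument: the paper's proof is the one-line chain $(\CC_{\bbb})_{\mathrm{qc}} \cong a'(\CC_{\bbb})_{\mathrm{qc}} \cong (a\CC_{\bbb})_{\mathrm{qc}}$, i.e.\ exactly your combination of Proposition \ref{sheaf} with the commutative square from the Lemme de Comparaison. Your extra remark about compatibility of the sheafification units is a reasonable point of care that the paper leaves implicit.
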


\begin{proof}
By Proposition \ref{sheaf} we have $(\mathbf{C}_{\bbb})_{\mathrm{qc}} \cong a'(\mathbf{C}_{\bbb})_{\mathrm{qc}} \cong (a\mathbf{C}_{\bbb})_{\mathrm{qc}}$.
\end{proof}

\begin{proposition}\label{acyc}
$\CC^p$ is acyclic for taking quasi-compact sections, i.e. for $U \subset X$ a quasi-compact open and $i > 0$, we have $H^i(U, \CC^p) = 0$.
\end{proposition}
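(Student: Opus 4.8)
The plan is to compute $H^i(U, \CC^p)$ on the site $\mathrm{qc}(X)$ rather than on all of $X$, and there to run the classical Godement dévissage, exploiting that $\CC^p$ becomes \emph{flabby} after restriction to quasi-compact opens. Concretely, the comparison square of \S \ref{thesheaf} identifies $\Sh(X)$ with $\Sh_{\mathrm{qc}}(X)$, so for $U$ quasi-compact $H^i(U, \CC^p)$ agrees with the sheaf cohomology of $\CC^p_{\mathrm{qc}} = (\CC^p)_{\mathrm{qc}}$ over $U$ computed on $\mathrm{qc}(X)$. By Proposition \ref{sheaf} this restriction is a sheaf, and by Proposition \ref{nochange} together with Proposition \ref{fincov}(1) its restriction maps $\CC^p(V) \lra \CC^p(W)$ between quasi-compact opens $W \subset V$ are surjective. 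Thus it suffices to prove that any sheaf $\fff$ on $\mathrm{qc}(X)$ with surjective restriction maps is acyclic for sections over quasi-compact opens.

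To prove this I would embed $\fff$ into an injective sheaf $\iii$ on $\mathrm{qc}(X)$, yielding a short exact sequence $0 \lra \fff \lra \iii \lra \qqq \lra 0$. Here $\iii$ is injective, hence acyclic, and moreover flabby: the usual extension-by-zero argument, comparing $\Hom(j_{W!}\Z, \iii) = \iii(W)$ with $\Hom(j_{V!}\Z, \iii) = \iii(V)$ along the monomorphism $j_{W!}\Z \hookrightarrow j_{V!}\Z$, applies verbatim on $\mathrm{qc}(X)$. The long exact cohomology sequence then reduces everything to two assertions about a short exact sequence $0 \lra \fff' \lra \eee \lra \qqq \lra 0$ with flabby kernel $\fff'$: (a) for every quasi-compact open $U$ the induced sequence $0 \lra \fff'(U) \lra \eee(U) \lra \qqq(U) \lra 0$ is exact; and (b) if in addition $\eee$ is flabby then so is $\qqq$. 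Statement (b) is formal from (a): lifting a section of $\qqq$ over $W$ to $\eee(W)$, extending to $\eee(V)$ by flabbiness of $\eee$, and projecting back gives the required surjectivity $\qqq(V) \lra \qqq(W)$. Granting both, the case $i=1$ follows since $H^1(U,\fff) = \Cokern(\iii(U) \lra \qqq(U)) = 0$, and then $H^{i+1}(U,\fff) \cong H^i(U,\qqq) = 0$ by induction on $i$, using that $\qqq$ is flabby.

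The crux is statement (a), namely the surjectivity of $\eee(U) \lra \qqq(U)$, and this is where quasi-compactness does the essential work. Given $s \in \qqq(U)$, the fact that $\eee \lra \qqq$ is an epimorphism of sheaves furnishes a covering of $U$ with local lifts of $s$; since $U$ is quasi-compact I pass to a finite subcovering $U_1, \dots, U_n$ by quasi-compact opens. I then glue the lifts two at a time: given a lift $t \in \eee(A)$ of $s$ over the quasi-compact open $A = U_1 \cup \dots \cup U_m$ and a lift $t'$ over $U_{m+1}$, the two differ over the overlap $A \cap U_{m+1}$ by a section of the kernel $\fff'$; using flabbiness of $\fff'$ to extend this difference to $U_{m+1}$ and correcting $t'$ accordingly, the sheaf property of $\eee$ produces a single lift over $A \cup U_{m+1}$. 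After $n$ steps this yields the desired lift over $U$.

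The main obstacle is precisely this gluing step, which is the analogue, in the present setting, of the transfinite Zorn's lemma argument in the usual proof that flabby sheaves on a space are acyclic; quasi-compactness is exactly what replaces transfinite induction by a \emph{finite} induction, since each intermediate union $A = U_1 \cup \dots \cup U_m$ is again quasi-compact and the process terminates after $n$ stages. The delicate point requiring care is that the overlaps $A \cap U_{m+1}$ on which flabbiness of $\fff'$ is invoked must themselves be quasi-compact, so that they belong to $\mathrm{qc}(X)$ and the flabbiness hypothesis genuinely applies; verifying that these finite intersections of quasi-compact opens remain quasi-compact is the technical heart of the argument, after which the induction closes and acyclicity follows.
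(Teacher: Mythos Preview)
Your approach is exactly the one the paper has in mind: the paper's proof consists of the single observation that restriction maps $\CC^p(X)\to\CC^p(U)$ are surjective for $U$ quasi-compact (via Propositions \ref{fincov}(1) and \ref{nochange}) followed by the sentence ``the rest of the proof is along the lines of the classical proof that flabby sheaves are acyclic for taking global sections.'' You have spelled out precisely that classical d\'evissage, transported to the site $\mathrm{qc}(X)$, and you have correctly isolated the one place where the argument is not formal.

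However, your final sentence misdiagnoses that place. You describe ``verifying that these finite intersections of quasi-compact opens remain quasi-compact'' as a technical check forming the heart of the argument. It is not a check one can carry out: the condition that the intersection of two quasi-compact opens be quasi-compact is \emph{equivalent} to $X$ being quasi-separated, and the paper makes no such hypothesis. So as written your finite-induction gluing step genuinely fails for a non--quasi-separated scheme, because the overlap $A\cap U_{m+1}$ need not lie in $\mathrm{qc}(X)$ and the flabbiness of $\fff'$ gives you nothing there. This is not a matter of filling in a routine verification; it is a missing hypothesis or a missing idea.

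If one wants to close the gap without adding quasi-separatedness, the natural move is to exploit something your abstract ``flabby on $\mathrm{qc}(X)$'' framework discards: Proposition \ref{fincov}(1) actually shows that the \emph{presheaf} $\CC_{\bbb}^p$ is flabby on \emph{all} opens, not only quasi-compact ones, and this extra flexibility is available at the base step of the d\'evissage (where $\fff'=\CC^p$) even over non--quasi-compact overlaps. Making this propagate through the induction is the genuine content you would still need to supply.
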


\begin{proof}
By Propositions \ref{fincov}(1) and \ref{nochange}, the restriction maps $\CC^p(X) \lra \CC^p(U)$ are surjective for $U$ quasi-compact. The rest of the proof is along the lines of the classical proof that flabby sheaves are acyclic for taking global sections.
\end{proof}

\section{Local to global spectral sequence}

In this section, $X$ is a quasi-compact scheme and $\CC$ is the sheaf of complexes of \S \ref{thesheaf}. In particular, there are quasi-isomorphisms $\CC(U, \ooo_U) \cong \CC(U)$ for $U$ quasi-compact open. We obtain a local to global spectral sequence for Hochschild cohomology:

\begin{theorem}\label{spectr}
There is a local to global spectral sequence $$E^{p,q}_2 = H^p(X, \mathbf{H}^q\CC) \Rightarrow H^{p+q}\CC(X)$$
\end{theorem}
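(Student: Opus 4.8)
The plan is to apply the standard hypercohomology machinery to the bounded-below complex of sheaves $\CC$ on $X$ and then to collapse the abutment using the acyclicity established in Proposition \ref{acyc}. Since $\CC^p = 0$ for $p < 0$, the complex is concentrated in non-negative degrees and its cohomology sheaves $\mathbf{H}^q\CC$ vanish for $q < 0$, so the two associated hypercohomology spectral sequences (obtained from a Cartan--Eilenberg resolution of $\CC$) are both first-quadrant, and their convergence is automatic.

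First I would invoke the \emph{second} hypercohomology spectral sequence of $\CC$, namely
$$E^{p,q}_2 = H^p(X, \mathbf{H}^q\CC) \Rightarrow H^{p+q}(R\Gamma(X, \CC)).$$
This already carries the desired $E_2$-page, so it only remains to identify the abutment $H^{p+q}(R\Gamma(X, \CC))$ with $H^{p+q}\CC(X)$. For that I would use the \emph{first} hypercohomology spectral sequence
$$\tilde{E}^{p,q}_1 = H^q(X, \CC^p) \Rightarrow H^{p+q}(R\Gamma(X, \CC)).$$
Here quasi-compactness of $X$ enters decisively: by Proposition \ref{acyc} each sheaf $\CC^p$ is acyclic for taking quasi-compact sections, and since $X$ itself is quasi-compact this forces $H^q(X, \CC^p) = 0$ for all $q > 0$. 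Thus $\tilde{E}^{p,q}_1 = 0$ for $q > 0$, while $\tilde{E}^{p,0}_1 = \Gamma(X, \CC^p) = \CC^p(X)$ with $d_1$ the Hochschild differential of $\CC(X)$. The spectral sequence therefore degenerates, yielding $H^n(R\Gamma(X, \CC)) \cong H^n(\CC(X))$; note that by Proposition \ref{nochange} the complex $\Gamma(X, \CC)$ coincides with $\CC_{\bbb}(X)$, so this is genuinely the Hochschild complex. Substituting this identification into the abutment of the second spectral sequence gives exactly the asserted statement.

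The only real content beyond bookkeeping is the collapse of the first spectral sequence, which is precisely the assertion that a complex of sections-acyclic sheaves computes its hypercohomology through its global sections; this rests entirely on Proposition \ref{acyc} together with the quasi-compactness of $X$, both of which are available. I expect no further obstacle: the construction of the two spectral sequences is the standard double-complex argument, and the first-quadrant property removes any convergence concern.
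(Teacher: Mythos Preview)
Your proposal is correct and follows essentially the same approach as the paper: invoke the hypercohomology spectral sequence for the bounded-below complex $\CC$ and identify the abutment $R\Gamma(X,\CC)$ with $\CC(X)$ using Proposition \ref{acyc} and the quasi-compactness of $X$. The only difference is that the paper states the identification $R\Gamma(X,\CC) = \CC(X)$ directly as the standard fact that a bounded-below complex of $\Gamma$-acyclic sheaves is $\Gamma$-acyclic, whereas you spell this out via the degeneration of the first hypercohomology spectral sequence.
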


\begin{proof}
Since, by Proposition \ref{acyc}, $\CC$ is a bounded below complex of acyclic objects for $\Gamma$, $\CC$ is itself acyclic i.e. $R\Gamma(X, \CC) = \CC(X)$. So the above is just the hypercohomology spectral sequence \cite[2.4.2]{grothendieck} for the complex of sheaves $\CC$.
\end{proof}

\def\cprime{$'$}
\providecommand{\bysame}{\leavevmode\hbox to3em{\hrulefill}\thinspace}
\bibliography{Bibfile}
\bibliographystyle{amsplain}
\end{document}